\author[A.~P.~Petravchuk]{Anatoliy P. Petravchuk}
\address{
Department of Algebra and Mathematical Logic\\ Faculty of Mechanics and Mathematics\\
Kyiv Taras Shevchenko University\\ Volodymyrska street, 01033
Kyiv, Ukraine} \email{aptr@univ.kiev.ua}
\title[On behavior of solvable ideals of Lie algebras under outer derivations]
{On behavior of solvable ideals of Lie algebras under outer
derivations}
\thanks{Supported by DFFD, Grant F25.1/095 }
\newtheorem{theorem}{Theorem}
\newtheorem{lemma}{Lemma}
\theoremstyle{definition}
\theoremstyle{remark}
\theoremstyle{problem}
\def\char{{\rm char\ }}
\let\leq\leqslant
\let\geq\geqslant
 \def\char{\rm char\,}
\begin{document}

\sloppy

\begin{abstract}
Let $L$ be a  finite dimensional Lie algebra  over a field $F$. It
is well known that the solvable radical $S(L)$ of the algebra $L$
is a characteristic ideal of $L$ if $\char F=0$ and there are
counterexamples to this statement in case  $\char F=p>0$. We prove
that the sum $S(L)$ of all solvable ideals of a Lie algebra $L$
(not necessarily finite dimensional) is a characteristic ideal of
$L$ in the following cases: 1) $\char F=0;$ 2) $S(L)$ is solvable
and its derived length is less than $\log _{2}p.$ Some estimations
(in characteristic $0$) for the derived length of ideals
$I+D(I)+\cdots +D^{k}(I)$ are obtained where $I$ is a solvable
ideal of $L$ and $D\in Der(L).$

\end{abstract}

\maketitle

\section{Introduction}
\label{} Let $L$ be a finite dimensional Lie algebra over a field
of characteristic $0$. It is  well-known  that its solvable
radical $S(L)$ is a characteristic ideal of $L,$ i.e.
$D(S(L))\subseteq S(L)$ for every derivation $D\in Der (L)$ (see,
for example, \cite{Jacob}, Ch.III, Th.6.7). This result breaks
down in characteristic $p>0$ (see \cite{Jacob}, p.74-75). The
noted counter-example has solvable radical of derived length $[ \
\log _{2}p]+1$ (because it is the tensor product of a simple Lie
algebra over a field $F$ of characteristic $p$ and the group
algebra $F[G]$ where $G$ is the cyclic group of order $p$). We
prove that the solvable radical $S(L)$ is a characteristic ideal
if its derived length is less that $\log _{2}p$. The same result
is true for infinite dimensional Lie algebras if we denote by
$S(L)$ the sum of all solvable ideals of $L$. This sum can be
considered as an analogue of solvable radical of the algebra $L$
(the ideal $S(L)$ is locally solvable while the sum of all locally
solvable ideals of $L$ can be not locally solvable \cite{Lat}). If
$L$ is an (infinite dimensional) Lie algebra over a field of
characteristic $0$ we prove that $S(L)$ is always an
characteristic ideal of the algebra $L.$

Derivations  of Lie algebras and associative algebras  were
studied by many authors (see for example \cite{Seligman},
\cite{Hartley}, \cite{AmSt}). In particular, the behavior of
locally nilpotent ideals of Lie algebras (in characteristic $0$)
under derivations was studied in \cite{Hartley}, where a passage
from a derivation to an automorphism  of a Lie algebra was used.

The main results of this paper can be easily deduced from
Theorem~\ref{solvability}, where
  behavior of solvable ideals of  Lie algebras (not
necessarily finite dimensional) is studied under outer derivations
of $L.$ Let $I$ be a  solvable ideal of derived length $n$ of a
Lie algebra $L$ and let $D$ be a derivation $D\in Der(L)$. It is
proved that  the ideal $I+D(I)$ is solvable of derived length
$\leq 2n$ if $\char F=0$ or $n<\log _{2}p$ where $p=\char F$

  Ideals of the form  $I_{k}=I+D(I)+\cdots
+D^{k}(I)$ were also studied where $I$ is a solvable  ideal of
derived length $n$ of the algebra $L$  and $D\in Der(L).$ The
previous results give an estimation for the derived length of
$I_{k}$ of the form $s(I_{k})\leq 2^{k-1}n.$ But the
Theorem~{\ref{degree} shows that the growth of the derived length
is in fact polynomial, namely $s(I_{k})\leq f(k)$ for some
polynomial $f(x)$ of degree $n.$

The notations in the paper are standard. If $L$ is a Lie algebra
over a field $F$ then by $S(L)$ we shall denote the sum of all
solvable ideals of $L.$  We denote by $s(L)$ the derived length of
a solvable Lie algebra $L$. The $k$-th member of the derived
series of a Lie algebra $L$ is denoted by $L^{(k)}$, in
particular, $L^{(0)}=L, \ L^{(1)}=[L, L], \ L^{(n)}=[L^{(n-1)},
L^{(n-1)}]. $  In this notations $s(L)=0$ if and only if $L=0.$ If
$X$ is an $F$-subspace of a Lie algebra $L$ then $X^{1}=X,
X^{2}=[X, X], X^{n}=[X^{n-1}, X]$ are subspaces of $L$. If $D$ is
a derivation of a Lie algebra $L$ and $A$ is a $F$-subspace of $L$
then we denote for convenience $D^{0}(A)=A, \
D^{k}(A)=D(D^{k-1}(A))$ for $k\geq 1.$

 \section{On  characteristicity of the solvable radical of a Lie algebra}
\label{}

In the next Lemma we collect for convenience some known facts
which will be often used in the sequel.
\begin{lemma}\label{rule}

Let $L$ be a Lie algebra over an arbitrary field $F$, let $D$ be
its derivation. Then

1) for any ideal $I$ of the algebra $L$ the $F$-subspace
$I+D(I)+\cdots +D^{n}(I)$ is an ideal of $L$ for every $n=1, 2,
\ldots $;

2) for any elements $x, y\in L$ it holds the Leibnitz's rule:
$$D^{k}([x, y])=\sum _{s=0}^{k}{k\choose s}[D^{s}(x),
D^{k-s}(y)].$$
\end{lemma}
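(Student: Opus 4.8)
The plan is to treat the two assertions separately, both by straightforward induction; no part of the argument is deep, so the only thing to watch is the index bookkeeping.

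For part 2) I would induct on $k$. The case $k=0$ is the trivial identity $[x,y]=[x,y]$, and the case $k=1$ is just the derivation property $D([x,y])=[D(x),y]+[x,D(y)]$. Assuming the formula for $k$, I apply $D$ to both sides and use linearity together with the derivation property on each bracket $[D^{s}(x),D^{k-s}(y)]$, which produces $[D^{s+1}(x),D^{k-s}(y)]+[D^{s}(x),D^{k-s+1}(y)]$. Reindexing the two resulting sums so that the exponent on $x$ matches and collecting coefficients, the binomial identity ${k\choose s-1}+{k\choose s}={k+1\choose s}$ (Pascal's rule) assembles exactly $\sum_{s=0}^{k+1}{k+1\choose s}[D^{s}(x),D^{k+1-s}(y)]$, completing the induction.

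For part 1) I first note that $J=I+D(I)+\cdots+D^{n}(I)$ is an $F$-subspace, being a finite sum of subspaces, so it remains to verify $[L,J]\subseteq J$; and since $J=\sum_{j=0}^{n}D^{j}(I)$ it suffices to show that $[L,D^{j}(I)]\subseteq\sum_{i=0}^{j}D^{i}(I)$ for every $0\le j\le n$. This I would prove by induction on $j$, crucially stating the claim uniformly for \emph{all} $x\in L$ so that the hypothesis can be reused after replacing $x$ by $D(x)$. The base case $j=0$ is exactly the hypothesis that $I$ is an ideal, $[L,I]\subseteq I$. For the step, the derivation property rewritten as
\[
[x,D^{j}(y)]=D\big([x,D^{j-1}(y)]\big)-[D(x),D^{j-1}(y)]
\]
does the work: by the induction hypothesis $[x,D^{j-1}(y)]\in\sum_{i=0}^{j-1}D^{i}(I)$, so applying $D$ lands it in $\sum_{i=0}^{j}D^{i}(I)$, while $[D(x),D^{j-1}(y)]\in\sum_{i=0}^{j-1}D^{i}(I)$ by the same hypothesis applied to $D(x)\in L$. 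Both terms lie in $\sum_{i=0}^{j}D^{i}(I)$, hence so does $[x,D^{j}(y)]$.

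I do not expect a genuine obstacle here, since these are standard facts collected for convenience. The only subtlety—and the point I would be most careful about—is formulating the inductive claim in part 1) for all elements of $L$ simultaneously, because the key identity feeds $D(x)$ back into a bracket and the induction would stall if the statement were pinned to a single fixed $x$. In part 2) the corresponding point of care is the reindexing that aligns the two sums before Pascal's rule is invoked.
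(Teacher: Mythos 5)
Your proof is correct. Note that the paper itself offers no proof of this lemma --- it is stated as a collection of known facts ``for convenience'' --- so there is nothing to compare against; your two inductions are the standard arguments and they fill the gap soundly. In part 2) the Pascal's-rule reindexing is exactly right, and in part 1) you correctly identify the one point that needs care, namely quantifying the inductive claim $[L,D^{j}(I)]\subseteq\sum_{i=0}^{j}D^{i}(I)$ over all of $L$ so that the identity $[x,D^{j}(y)]=D([x,D^{j-1}(y)])-[D(x),D^{j-1}(y)]$ can feed $D(x)$ back into the hypothesis.
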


\begin{lemma}\label{deriv}

Let $L$ be a Lie algebra over an arbitrary field,  let $I$ be an
ideal of the algebra $L.$ Then for any derivation $D\in Der(L)$ it
holds $D^{m}(I^{(s)})\subseteq I$ for every $m\leq 2^{s}-1.$
\end{lemma}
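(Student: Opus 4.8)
The plan is to proceed by induction on $s$, using the Leibniz rule from part 2) of Lemma~\ref{rule} together with the fact that $I$ is an ideal. For the base case $s=0$ we have $I^{(0)}=I$ and $2^{0}-1=0$, so the only assertion is $D^{0}(I)=I\subseteq I$, which is trivial. It is instructive to record $s=1$ by hand as well: here $I^{(1)}=[I,I]$ and $2^{1}-1=1$, and for $x,y\in I$ we have $D([x,y])=[D(x),y]+[x,D(y)]$, each summand lying in $I$ because $I$ is an ideal while $D(x),D(y)\in L$.

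For the inductive step I would assume the claim for some $s$, that is $D^{a}(I^{(s)})\subseteq I$ whenever $a\le 2^{s}-1$, and deduce $D^{m}(I^{(s+1)})\subseteq I$ for all $m\le 2^{s+1}-1$. Since $I^{(s+1)}=[I^{(s)},I^{(s)}]$, it suffices to control $D^{m}([x,y])$ for $x,y\in I^{(s)}$. Expanding by Leibniz gives $D^{m}([x,y])=\sum_{t=0}^{m}{m\choose t}[D^{t}(x),D^{m-t}(y)]$, so it is enough to show that every bracket $[D^{t}(x),D^{m-t}(y)]$ lies in $I$.

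The key observation is that a single bracket $[D^{t}(x),D^{m-t}(y)]$ already lies in $I$ as soon as one of its two factors lies in $I$, because $I$ is an ideal and the remaining factor is merely some element of $L$. By the inductive hypothesis $D^{t}(x)\in I$ whenever $t\le 2^{s}-1$, and $D^{m-t}(y)\in I$ whenever $m-t\le 2^{s}-1$. A term could therefore escape $I$ only if both $t\ge 2^{s}$ and $m-t\ge 2^{s}$, which forces $m\ge 2^{s+1}$. Since we assume $m\le 2^{s+1}-1$, this never occurs: for every split $t+(m-t)=m$ at least one part is $\le 2^{s}-1$, so each term lies in $I$ and hence $D^{m}(I^{(s+1)})\subseteq I$. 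This closes the induction.

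The step I expect to require the most care is not any computation but the bookkeeping of the index bound. One must notice that the ideal property lets us be content with only \emph{one} factor landing in $I$, and it is precisely this that doubles the admissible range of $m$ from $2^{s}-1$ to $2^{s+1}-1$ at each stage, matching the exponent in the statement; everything else is a direct application of the Leibniz rule.
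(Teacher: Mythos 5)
Your proof is correct and follows essentially the same route as the paper: induction on $s$, Leibniz expansion of $D^{m}([x,y])$, and the observation that in each split $t+(m-t)=m\le 2^{s+1}-1$ at least one index is $\le 2^{s}-1$, so the corresponding factor lands in $I$ and the ideal property absorbs the bracket. The only (harmless) difference is that you anchor the induction at $s=0$ rather than $s=1$.
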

\begin{proof}
Induction on $s.$ If $s=1,$ then $$D(I^{(1)})=D([I, I])\subseteq
[D(I), I]+[I, D(I)]\subseteq I$$ because $I$ is an ideal of $L.$
Let the statement of Lemma be true for $s-1$, let us prove it for
$s.$ Take  an arbitrary positive integer $m$ such that $m\leq
2^{s}-1.$ Then for arbitrary elements $x, y\in I^{(s-1)}$ it holds
$$ D^{m}([x, y])=\sum _{i=0}^{m}{m\choose i}[D^{i}(x),
D^{m-i}(y)]. \eqno (1)
$$ Since $i+(m-i)=m\leq 2^{s}-1$, then at least one
of the numbers $i$ or $m-i$ does not exceed $2^{s-1}-1.$ As
elements $x, y$ lie in $I^{(s-1)}$, we have by the induction
hypothesis that either $D^{i}(x)$ or $D^{m-i}(y)$ belong to the
ideal $I.$ But then from (1) it follows that $D^{m}([x, y])\in I.$
Thus $D^{m}(I^{(s)})\subseteq I$ for every nonnegative integers
$m\leq 2^{s}-1.$ The Lemma is proved.

\end{proof}

\begin{lemma}\label{key}

Let $L$ be a Lie algebra over a field $F$ and $I$ be  an ideal of
$L.$ If $char F$ does not divide the binomial coefficient
${2^{k}\choose 2^{k-1}}$, then for every $D\in Der(L)$ it holds
$$[D^{2^{k-1}}(I^{(k-1)}), D^{2^{k-1}}(I^{(k-1)})]\subseteq
D^{2^{k}}(I^{(k)})+I.$$

\end{lemma}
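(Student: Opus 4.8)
The key identity to establish is Lemma~\ref{key}, relating a bracket of two copies of $D^{2^{k-1}}(I^{(k-1)})$ to $D^{2^k}(I^{(k)})$ modulo $I$.

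\bigskip

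The plan is to start from the Leibnitz rule (Lemma~\ref{rule}, part 2) applied with exponent $2^k$ to a bracket $[x,y]$ where $x,y\in I^{(k-1)}$. Expanding,
$$
D^{2^k}([x,y])=\sum_{s=0}^{2^k}\binom{2^k}{s}\,[D^s(x),D^{2^k-s}(y)],
$$
and since $[x,y]\in I^{(k)}$, the left-hand side lies in $D^{2^k}(I^{(k)})$. The idea is to isolate the single central term $s=2^{k-1}$, which is exactly $\binom{2^k}{2^{k-1}}[D^{2^{k-1}}(x),D^{2^{k-1}}(y)]$, and to argue that every other term on the right already lies in $I$. Granting this, the whole sum is congruent modulo $I$ to that central term, so
$$
\binom{2^k}{2^{k-1}}\,[D^{2^{k-1}}(x),D^{2^{k-1}}(y)]\in D^{2^k}(I^{(k)})+I.
$$
Because $\char F$ does not divide $\binom{2^k}{2^{k-1}}$, this coefficient is invertible in $F$, and dividing through yields the asserted inclusion for the generators $D^{2^{k-1}}(x),D^{2^{k-1}}(y)$ of the subspace $D^{2^{k-1}}(I^{(k-1)})$; bilinearity then extends it to the full bracket.

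\bigskip

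The main work is therefore the claim that for $s\neq 2^{k-1}$ the term $\binom{2^k}{s}[D^s(x),D^{2^k-s}(y)]$ lies in $I$. Here I would invoke Lemma~\ref{deriv}: since $x,y\in I^{(k-1)}$, one has $D^m(x),D^m(y)\in I$ whenever $m\le 2^{k-1}-1$. For a term with $s\ne 2^{k-1}$, the two exponents $s$ and $2^k-s$ sum to $2^k$, and since they are distinct one of them is strictly smaller than $2^{k-1}$, hence $\le 2^{k-1}-1$. Thus either $D^s(x)\in I$ or $D^{2^k-s}(y)\in I$; as $I$ is an ideal of $L$ and the other factor lies in $L$, the bracket lands in $I$. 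This is the crux of the argument, and the only subtle point is that the case $s=2^{k-1}$ is genuinely the unique index where \emph{both} exponents equal $2^{k-1}$ and neither falls below the threshold of Lemma~\ref{deriv}, which is precisely why that term survives and why the divisibility hypothesis on the binomial coefficient is needed to conclude.

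\bigskip

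I expect no serious obstacle beyond bookkeeping: the counting argument separating the central index from all others is elementary, and the appeal to Lemma~\ref{deriv} is direct once one checks the inequality $\min(s,2^k-s)\le 2^{k-1}-1$ for $s\ne 2^{k-1}$. The one point requiring care is that the conclusion is stated for the subspace $D^{2^{k-1}}(I^{(k-1)})$, whose elements are $F$-linear combinations of the generators $D^{2^{k-1}}(z)$ with $z\in I^{(k-1)}$; the bracket of such combinations expands bilinearly into brackets of generators, and since both $D^{2^k}(I^{(k)})$ and $I$ are $F$-subspaces closed under addition, the inclusion passes from generators to arbitrary elements without difficulty.
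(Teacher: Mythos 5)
Your proposal is correct and follows essentially the same route as the paper: expand $D^{2^k}([x,y])$ by the Leibnitz rule, isolate the central term $s=2^{k-1}$, dispose of all other terms via Lemma~\ref{deriv} (since $\min(s,2^k-s)\le 2^{k-1}-1$) together with the ideal property of $I$, and divide by the invertible coefficient $\binom{2^k}{2^{k-1}}$. The only cosmetic difference is that you make the bilinearity/generators step explicit, which the paper leaves implicit.
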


\begin{proof}

Let $x, y\in I^{(k-1)}$ be arbitrary elements. Then
$$D^{2^{k}}([x, y])=\sum _{s=0}^{2^{k}}{2^{k}\choose s}[D^{s}(x),
D^{2^{k}-s}(y)].$$ Rewrite the last sum as a sum of two summands:
$$ D^{2^{k}}([x, y])={2^{k}\choose 2^{k-1}}[D^{2^{k-1}}(x),
D^{2^{k-1}}(y)]+ $$ $$+\sum _{s=0, s\not=
2^{k-1}}^{2^{k}}{2^{k}\choose s}[D^{s}(x), D^{2^{k}-s}(y)] \eqno
(2) $$

In  the second summand of the  righthand side  of this formula
either the number $s$ or the number $2^{k}-s$ does not exceed
$2^{k-1}-1$ and therefore by Lemma~\ref{deriv} the sum $\sum
_{s=0, s\not= 2^{k-1}}^{2^{k}}{2^{k}\choose s}[D^{s}(x),
D^{2^{k}-s}(y)]$ lies in the ideal $I.$ Further, by conditions of
Lemma $\char F$ does not divide the binomial coefficient
$2^{k}\choose 2^{k-1}$, so from the equality $(2)$, taking into
account the arbitrary choice of $x$ and $y$, we obtain
$$[D^{2^{k-1}}(I^{(k-1)}), D^{2^{k-1}}(I^{(k-1)})]\subseteq
D^{2^{k}}(I^{(k)})+I.$$
\end{proof}

\begin{lemma}\label{binom}
An odd prime number $p$ does not divide the binomial coefficients
${2\choose 1}, \ {2^{2}\choose 2}, \ \ldots , {2^{n}\choose
2^{n-1}}$ if and only if  $n<\log _{2}{p}.$

\end{lemma}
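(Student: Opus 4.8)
The plan is to translate the divisibility statement into one about $p$-adic valuations and then apply Legendre's formula. First I would record the elementary equivalence $n<\log_2 p \iff 2^n<p$, so that the claim becomes: $p\nmid {2^{k}\choose 2^{k-1}}$ for all $k=1,\dots,n$ if and only if $2^n<p$. Writing $v_p(m)$ for the exponent of $p$ in the factorisation of an integer $m$, Legendre's formula applied to ${2^{k}\choose 2^{k-1}}=(2^k)!/\big((2^{k-1})!\big)^2$ gives
$$v_p\Big({2^{k}\choose 2^{k-1}}\Big)=\sum_{j\ge 1}\Big(\lfloor 2^k/p^j\rfloor-2\lfloor 2^{k-1}/p^j\rfloor\Big),$$
and each summand is either $0$ or $1$, being of the shape $\lfloor 2x\rfloor-2\lfloor x\rfloor$ with $x=2^{k-1}/p^j$. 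The whole argument then reduces to deciding, for the relevant $k$, whether any summand equals $1$.

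For sufficiency I would assume $2^n<p$. Then for every $k\le n$ and every $j\ge 1$ one has $2^k\le 2^n<p\le p^j$, so $\lfloor 2^k/p^j\rfloor=\lfloor 2^{k-1}/p^j\rfloor=0$; hence the valuation vanishes and $p\nmid {2^{k}\choose 2^{k-1}}$ for all such $k$. For necessity I would argue by contraposition: assume $2^n\ge p$. Since $p$ is odd it is not a power of $2$, so there is a unique index $k_0=\lceil\log_2 p\rceil$ with $2^{k_0-1}<p<2^{k_0}$, and the hypothesis forces $k_0\le n$. For this $k_0$ the $j=1$ term of the sum is $\lfloor 2^{k_0}/p\rfloor-2\lfloor 2^{k_0-1}/p\rfloor=\lfloor 2^{k_0}/p\rfloor\ge 1$, since $2^{k_0-1}<p$ kills the second floor while $2^{k_0}>p$ makes the first at least $1$. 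Thus $p\mid {2^{k_0}\choose 2^{k_0-1}}$, so the coefficients are not all coprime to $p$.

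The main point to get right is the location of this threshold index $k_0$ and the verification $k_0\le n$, where the oddness of $p$ is exactly what is used: it guarantees the strict inequalities $2^{k_0-1}<p<2^{k_0}$ and upgrades $2^n\ge p$ to $2^n>p$. Once the valuation formula is in place, the remaining content is the transparent observation that the single base-$p$ digit $2^{k-1}$ doubles without carry precisely while $2^k<p$ and produces a carry, hence a factor of $p$, as soon as $2^k$ exceeds $p$. If one prefers, this same carry picture is Kummer's theorem and could replace Legendre's formula with a purely base-$p$ digit argument; either route makes the computation routine, so no serious obstacle remains beyond bookkeeping of the threshold.
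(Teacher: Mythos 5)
Your proof is correct and follows essentially the same route as the paper: both directions reduce to locating the unique index $k_0$ with $2^{k_0-1}<p<2^{k_0}$ (where oddness of $p$ rules out $p$ being a power of $2$) and checking whether $k_0\leq n$, i.e., whether $2^n>p$ fails or holds. The divisibility the paper uses --- $p$ divides the numerator $(2^{k_0})!$ of ${2^{k_0}\choose 2^{k_0-1}}$ but not the denominator $\left((2^{k_0-1})!\right)^2$ --- is exactly the $j=1$ term of your Legendre computation, so your formula is sound but slightly heavier machinery for the same observation.
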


\begin{proof}
Let an odd  prime number $p$ do not divide the above  mentioned
binomial coefficients  and $n\geq \log _{2}p$.  Then we have
$p<2^{n}$ and therefore the number $p$ lies  in some interval
$(2^{k-1}, 2^{k})$ for some $k <n$. It is easy to see that the
prime number $p$ does divide the binomial coefficient
${2^{k}\choose 2^{k-1}}=\frac{1\cdot 2\cdots \cdot 2^{k}}{(1\cdot
2\cdots \cdot 2^{k-1})^{2}}$ because $p$ divides the numerator and
does not divide the denominator of this fraction. This contradicts
to our assumption and therefore $n<\log _{2}p$. Let now $n<\log
_{2}p.$ Then $p>2^{n}$  and the number $p$ does not divide all the
binomial coefficients ${2\choose 1}, \ {2^{2}\choose 2}, \ \ldots
, {2^{n}\choose 2^{n-1}}$. The proof is complete.

\end{proof}

\begin{theorem}\label{solvability}
Let $L$ be a Lie algebra over a field $F$ and let  $I$ be its
solvable ideal of derived length $n$. Then the  ideal $I+D(I)$ is
solvable and its  derived length $\leq 2n$ in the following cases:
1) $\char F=0;$ 2) $n< \log _{2}p$ where $p=\char F>0.$

\end{theorem}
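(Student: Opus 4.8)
The plan is to work with the ideal $J:=I+D(I)$, which is an ideal of $L$ by part~1 of Lemma~\ref{rule}, and to control its entire derived series by a single inductive invariant, namely
$$J^{(k)}\subseteq I+D^{2^{k}}\bigl(I^{(k)}\bigr)\qquad\text{for }0\leq k\leq n.$$
The base case $k=0$ is just the defining equality $J=I+D(I)=I+D^{2^{0}}(I^{(0)})$, so the real work is to advance this inclusion one step along the derived series.

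For the inductive step I would assume $J^{(k)}\subseteq I+D^{2^{k}}(I^{(k)})$ and expand
$$J^{(k+1)}=[J^{(k)},J^{(k)}]\subseteq\bigl[\,I+D^{2^{k}}(I^{(k)}),\ I+D^{2^{k}}(I^{(k)})\,\bigr].$$
By bilinearity of the bracket this breaks into four summands. Since $I$ is an ideal of $L$ and $D^{2^{k}}(I^{(k)})\subseteq L$, every summand that still contains a factor from $I$ lies back in $I$; the only new contribution is $[D^{2^{k}}(I^{(k)}),D^{2^{k}}(I^{(k)})]$. This is exactly the left-hand side of Lemma~\ref{key} read with index $k+1$, so as long as $\char F$ does not divide ${2^{k+1}\choose 2^{k}}$ it is contained in $D^{2^{k+1}}(I^{(k+1)})+I$. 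Gathering the summands gives $J^{(k+1)}\subseteq I+D^{2^{k+1}}(I^{(k+1)})$, which closes the induction.

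To conclude, take $k=n$. As $I$ has derived length $n$ we have $I^{(n)}=0$, hence $D^{2^{n}}(I^{(n)})=0$ and the invariant degenerates to $J^{(n)}\subseteq I$. Using the standard identity $\bigl(J^{(n)}\bigr)^{(n)}=J^{(2n)}$ together with the monotonicity of the derived series under $J^{(n)}\subseteq I$, I get
$$J^{(2n)}=\bigl(J^{(n)}\bigr)^{(n)}\subseteq I^{(n)}=0,$$
so $J=I+D(I)$ is solvable with $s(J)\leq 2n$.

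Finally I must check that the divisibility hypothesis is actually available at every step: the induction invokes Lemma~\ref{key} with indices $1,2,\dots,n$, that is, it needs $\char F\nmid{2^{j}\choose 2^{j-1}}$ for $j=1,\dots,n$. In case~1 ($\char F=0$) this holds trivially. In case~2, with $p=\char F>0$ and $n<\log_{2}p$, Lemma~\ref{binom} asserts precisely that $p$ divides none of ${2\choose 1},\dots,{2^{n}\choose 2^{n-1}}$; here $n<\log_{2}p$ forces $p$ to be odd as soon as $n\geq1$, while $n=0$ is the trivial case $I=0$. I expect the genuine content to be the discovery of the correct invariant $J^{(k)}\subseteq I+D^{2^{k}}(I^{(k)})$ — built so that Lemma~\ref{key} feeds the induction with exactly the right power of $D$ and the right derived term — together with the matching of the index range $1,\dots,n$ to the binomial coefficients controlled by Lemma~\ref{binom}.
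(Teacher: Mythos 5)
Your proposal is correct and follows essentially the same route as the paper: the same invariant $J^{(k)}\subseteq D^{2^{k}}(I^{(k)})+I$, advanced by Lemma~\ref{key} and justified arithmetically by Lemma~\ref{binom}, then specialized at $k=n$ to get $J^{(n)}\subseteq I$ and $J^{(2n)}=0$. The only (cosmetic) difference is that you start the induction at the trivial case $k=0$ and let Lemma~\ref{key} with $j=1$ produce the inclusion $[D(I),D(I)]\subseteq D^{2}(I^{(1)})+I$, which the paper instead verifies directly by hand as its base case $k=1$.
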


\begin{proof} If $p=2$ then the statement is true because
in this case $s(I)=n=0$ i.e. $I=0.$ So, we assume that $\char F=0$
or $\char F=p>2.$ Denote $J=I+D(I)$ and show that for every
positive integer $k, \ (k\leq n) $ it holds $J^{(k)}\subseteq
D^{2^{k}}(I^{(k)})+I.$ We shall prove this inclusion by induction
on $k.$ If $k=1$ then
$$ J^{(1)}=[J, J]=[I+D(I), I+D(I)]\subseteq I+[D(I), D(I)].$$
For arbitrarily chosen elements $x, y\in I$ we have by the
Leibnitz's rule
$$[D(x), D(y)]=\frac{1}{2}\{ D^{2}([x, y])-[D^{2}(x), y]-[x,
D^{2}(y)]\}.$$ Thus $[D(I), D(I)]\subseteq D^{2}(I^{(1)})+I$ and
therefore $J^{(1)}\subseteq D^{2}(I^{(1)})+I.$ Assuming that the
inclusion is proved for $k-1$ we shall prove it for $k.$ By
inductive hypothesis $J^{(k-1)}\subseteq
D^{2^{k-1}}(I^{(k-1)})+I.$
 Therefore $$J^{(k)}=[J^{(k-1)},
J^{(k-1)}]\subseteq [D^{2^{k-1}}(I^{(k-1)})+I,
D^{2^{k-1}}(I^{(k-1)})+I].$$ Since $I$ is an ideal of $L$, it
follows from above the inclusion   $$J^{(k)} \subseteq
[D^{2^{k-1}}(I^{(k-1)}), D^{2^{k-1}}(I^{(k-1)})]+I.$$ From the
conditions of Theorem and by Lemma~\ref{binom} it follows that
$\char F$ does not divide the binomial coefficient $2^{k}\choose
2^{k-1}$. Then by Lemma~\ref{key} we obtain
$$[D^{2^{k-1}}(I^{(k-1)}), D^{2^{k-1}}(I^{(k-1)})]\subseteq
D^{2^{k}}(I^{(k)})+I$$ and therefore $J^{(k)}\subseteq
D^{2^{k}}(I^{(k)})+I$. Put now in the last inclusion $k=n=s(I)$
where $s(I)$ is the derived length of $I$. Then it holds
$$J^{(n)}\subseteq D^{2^{n}}(I^{(n)})+I=D^{2^{n}}(0)+I=I.$$ But
then $J^{(2n)}\subseteq I^{(n)}=0$, i.e. $J=I+D(I)$ is a solvable
ideal of $L$ of derived length $\leq 2n.$  The proof is complete.

\end{proof}

The next Theorem is the main result of the paper. It follows
immediately from Theorem~\ref{solvability}.

\begin{theorem}\label{radical}
Let $L$ be a Lie algebra over a field $F$ and let $S(L)$ be the
sum of all solvable ideals of $L.$ Then $S(L)$ is a characteristic
ideal of the algebra $L$ in the  following cases: 1) $\char F=0;$
2) $S(L)$ is solvable of derived length $<\log _{2}p$ where
$p=\char F>0.$
\end{theorem}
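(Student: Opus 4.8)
The goal is to show $D(S(L))\subseteq S(L)$ for every $D\in\Der(L)$; since a sum of ideals is automatically an ideal, $S(L)$ is already an ideal of $L$, and only its invariance under every derivation needs proof. The plan is to derive everything from Theorem~\ref{solvability}, exploiting the fact that $S(L)$ is by definition the \emph{sum of all} solvable ideals of $L$, so that any solvable ideal produced from the given data is automatically contained in $S(L)$.

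In case 1 ($\char F=0$) I would argue ideal by ideal, since in the infinite-dimensional setting $S(L)$ itself need not be solvable. Write $S(L)=\sum_{\alpha}I_{\alpha}$ with each $I_{\alpha}$ a solvable ideal of $L$. For a fixed $D\in\Der(L)$ and each index $\alpha$, Theorem~\ref{solvability} applies to $I_{\alpha}$ (with no restriction on the derived length, since $\char F=0$) and shows that $I_{\alpha}+D(I_{\alpha})$ is a solvable ideal of $L$. By the definition of $S(L)$ as the sum of all solvable ideals we get $I_{\alpha}+D(I_{\alpha})\subseteq S(L)$, and in particular $D(I_{\alpha})\subseteq S(L)$. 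Because $D$ is linear and every element of $S(L)$ is a finite sum of elements drawn from the various $I_{\alpha}$, it follows that $D(S(L))=\sum_{\alpha}D(I_{\alpha})\subseteq S(L)$, which is exactly the assertion.

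In case 2 ($S(L)$ solvable of derived length $n<\log_{2}p$) the argument is even shorter, because now $S(L)$ is itself a single solvable ideal to which Theorem~\ref{solvability} applies directly: its derived length $n$ satisfies the hypothesis $n<\log_{2}p$. Hence $S(L)+D(S(L))$ is a solvable ideal of $L$, so again it is contained in $S(L)$ by definition of the latter, and therefore $D(S(L))\subseteq S(L)$.

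I do not expect a genuine obstacle here, since the statement is advertised to follow immediately from Theorem~\ref{solvability}; the real content has already been spent there and in Lemmas~\ref{deriv}--\ref{binom}. The only points requiring care are bookkeeping ones: in case 1 one must resist applying Theorem~\ref{solvability} to $S(L)$ as a whole (it may fail to be solvable) and instead pass through the individual solvable ideals $I_{\alpha}$, and in both cases one uses Lemma~\ref{rule} to know that $I+D(I)$ is an ideal before invoking its solvability. Verifying that the derived-length hypothesis in case 2 transfers unchanged to $S(L)$ is immediate from the way that case is stated.
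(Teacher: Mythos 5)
Your proposal is correct and matches the paper's intent exactly: the paper gives no separate argument, stating only that the theorem ``follows immediately from Theorem~\ref{solvability},'' and the deduction it has in mind is precisely yours --- apply Theorem~\ref{solvability} to each solvable ideal $I_{\alpha}$ (or to $S(L)$ itself in case 2) and use that the resulting solvable ideal $I_{\alpha}+D(I_{\alpha})$ must lie in $S(L)$ by definition. Your care in treating case 1 summand-by-summand, since $S(L)$ need not be solvable in the infinite-dimensional setting, is the right bookkeeping.
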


 \section{On   derived length of ideals $I+D(I)+\cdots D^{k}(I)$ }
\label{}

\begin{lemma}\label{estimation}
Let $L$ be a Lie algebra over a field $F$ of characteristic $0$
and $I$ be an abelian ideal of $L.$ Then for every derivation
$D\in Der(L)$ the ideal $J_{k}=I+D(I)+\cdots +D^{k}(I)$ is
solvable of derived length $\leq k+1$ for any positive integer
$k.$
\end{lemma}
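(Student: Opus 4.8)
The plan is to prove the statement by induction on $k$, the whole argument being driven by the single inclusion
$[J_k,J_k]\subseteq J_{k-1}$, where throughout I write $J_m=I+D(I)+\cdots+D^m(I)$ (an ideal of $L$ by Lemma~\ref{rule}, part 1)). Once this inclusion is in hand the derived-length bound drops out of the standard identity $M^{(k+1)}=(M^{(1)})^{(k)}$: applying it to $M=J_k$ and using the inductive hypothesis $J_{k-1}^{(k)}=0$ gives $J_k^{(k+1)}=(J_k^{(1)})^{(k)}\subseteq J_{k-1}^{(k)}=0$. The base case $k=0$ is trivial, since $J_0=I$ is abelian and hence $s(J_0)\leq 1$.

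The first, routine, ingredient is the containment $[D^a(I),D^b(I)]\subseteq J_{\min(a,b)}$ for all $a,b$. Indeed $D^a(I)\subseteq J_a$ and $J_a$ is an ideal, so $[D^a(I),D^b(I)]\subseteq[J_a,L]\subseteq J_a$; symmetrically the bracket lies in $J_b$; since the $J_m$ form an ascending chain this gives $J_a\cap J_b=J_{\min(a,b)}$. In particular every bracket $[D^i(I),D^j(I)]$ with $(i,j)\neq(k,k)$ and $i,j\leq k$ has $\min(i,j)\leq k-1$ and therefore already lies in $J_{k-1}$.

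The heart of the proof is to show that the one remaining top bracket $[D^k(I),D^k(I)]$ also lies in $J_{k-1}$. Here I would use the abelian hypothesis: for $x,y\in I$ we have $[x,y]=0$, so the Leibnitz rule (Lemma~\ref{rule}, part 2)) yields $0=D^{2k}([x,y])=\sum_{a=0}^{2k}{2k\choose a}[D^a(x),D^{2k-a}(y)]$. I isolate the middle summand ${2k\choose k}[D^k(x),D^k(y)]$ and observe that for each $a\neq k$ one of the two indices $a,\,2k-a$ is $\leq k-1$; the factor carrying that index lies in $J_{k-1}$, and since $J_{k-1}$ is an ideal the whole summand $[D^a(x),D^{2k-a}(y)]$ lies in $J_{k-1}$. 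As $\char F=0$ the coefficient ${2k\choose k}$ is invertible in $F$, so $[D^k(x),D^k(y)]\in J_{k-1}$, and hence $[D^k(I),D^k(I)]\subseteq J_{k-1}$.

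Combining the last two paragraphs, $J_k^{(1)}=[J_k,J_k]=\sum_{i,j\leq k}[D^i(I),D^j(I)]\subseteq J_{k-1}$, which completes the induction. The only delicate point is precisely the control of the top bracket $[D^k(I),D^k(I)]$: a priori it lies only in $J_k$, and it is the combination of the abelian hypothesis on $I$ with the invertibility of ${2k\choose k}$ in characteristic $0$ that pushes it down one step into $J_{k-1}$. This single-step descent is what replaces the crude doubling estimate of Theorem~\ref{solvability} and produces linear, rather than exponential, growth of the derived length.
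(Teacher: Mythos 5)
Your proof is correct and follows essentially the same route as the paper: induction on $k$ driven by the inclusion $[J_k,J_k]\subseteq J_{k-1}$, obtained by expanding $0=D^{2k}([x,y])$ via the Leibnitz rule, pushing all off-center summands into $J_{k-1}$ by the ideal property, and dividing by the nonzero coefficient ${2k\choose k}$. The only (harmless) difference is that you make the containment $[D^a(I),D^b(I)]\subseteq J_{\min(a,b)}$ explicit, where the paper attributes the same fact to the induction hypothesis.
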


\begin{proof}
Let us show by induction on $k$ that $[D^{k}(I),
D^{k}(I)]\subseteq J_{k-1},$ where $J_{k-1}=I+D(I)+\cdots
+D^{k-1}(I).$ If $k=1$, then $[D(I), D(I)]\subseteq D^{2}([I,
I])+[D(I), I]+[I, D(I)]\subseteq I$ because $I$ is an ideal of
$L.$ In our notations $J_{0}=I+D^{0}(I)=I$ and the base of
induction is proved. Assume that it holds the inclusion
$[D^{k-1}(I), D^{k-1}(I)]\subseteq J_{k-2}.$ Take  arbitrary
elements $x, y\in I$ and consider the  equality
$$0=D^{2k}([x, y])=\sum _{s=0}^{2k}{2k\choose s}[D^{s}(x),
D^{2k-s}(y)].$$

After carrying the central summand from the right side to the left
side we obtain
$$-{2k\choose k}[D^{k}(x), D^{k}(y)]=\sum _{s=0, s\not=
k}^{2k}{2k\choose s}[D^{s}(x), D^{2k-s}(y)].
$$ Since either $s<k$ or $2k-s<k$ in the later sum, it is obvious
that all summands in the right side of this equality lie in
$J_{k-1}$ (by the induction hypothesis). But then $[D^{k}(x),
D^{k}(y)]\in J_{k-1}$, which implies the inclusion $[D^{k}(I),
D^{k}(I)]\in J_{k-1}$ because elements $x, y$ were arbitrarily
chosen. As $J_{k}=J_{k-1}+D^{k}(I)$ we obtain the inclusion
$[J_{k}, J_{k}]\subseteq J_{k-1}$, which means that the ideal
$J_{k}$ is solvable of derived length $\leq k+1$.
\end{proof}
The next statement can be easily obtained from elementary
properties of linear recurrence relations (see, for
example~\cite{handbook}, \S 3.3.3)
\begin{lemma}\label{recurrence}
  Let $ a_{n}-a_{n-1}=f(n), \ a_{0}=c, \ n\geq 1$ be a real nonhomogeneous
  first-order recurrence relation where $f(x)$ is a polynomial of degree
  $k$ and $a_{0}=c$ is an initial condition. Then this
  relation has a unique solution which is of the form $\{
  a_{n}=\phi (n)\}$ where $\phi (x)$ is a polynomial of degree
  $k+1$.

\end{lemma}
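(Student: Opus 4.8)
The plan is to treat the recurrence first as a rule determining a sequence, and only afterwards to exhibit a polynomial that reproduces that sequence. First I would observe that existence and uniqueness of the \emph{sequence} $\{a_n\}$ is immediate: the relation $a_n = a_{n-1} + f(n)$ together with $a_0 = c$ determines every term recursively, so there is exactly one sequence satisfying both conditions. The whole content of the lemma is therefore to show that this sequence is obtained by evaluating a single polynomial $\phi$ of degree $k+1$ at $x=n$.

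The key tool I would introduce is the backward difference operator $\nabla$ acting on the space of real polynomials by $(\nabla\phi)(x) = \phi(x) - \phi(x-1)$. The one nonroutine computation is the leading-term analysis: if $\phi$ has degree $d\ge 1$ with leading coefficient $a_d$, then in $\phi(x)-\phi(x-1)$ the $x^{d}$ terms cancel and the resulting leading term is $d\,a_d\,x^{d-1}$, so $\nabla$ sends a polynomial of degree $d$ to one of degree exactly $d-1$ and annihilates the constants. Restricting $\nabla$ to the space $P_{k+1}$ of polynomials of degree $\le k+1$ gives a linear map $P_{k+1}\to P_k$ whose kernel is precisely the constants; since $\dim P_{k+1}=k+2$, $\dim P_k = k+1$, and the kernel is one-dimensional, rank--nullity forces $\nabla$ to map $P_{k+1}$ \emph{onto} $P_k$.

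With this in hand the remainder is bookkeeping. As $f$ has degree $k$ it lies in $P_k$, so surjectivity produces $\psi\in P_{k+1}$ with $\nabla\psi=f$, that is $\psi(x)-\psi(x-1)=f(x)$ identically. Setting $\phi(x)=\psi(x)+(c-\psi(0))$ leaves the degree equal to $k+1$ (adding a constant to a polynomial of degree $\ge 1$ does not change its degree), while $\phi(0)=c$ and $\phi(n)-\phi(n-1)=\psi(n)-\psi(n-1)=f(n)$ for all $n\ge 1$. Hence the sequence $\{\phi(n)\}$ satisfies both the recurrence and the initial condition, and by the uniqueness of the sequence it coincides with $\{a_n\}$. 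Finally $\phi$ itself is unique, since two polynomials agreeing at all nonnegative integers agree as polynomials.

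The main obstacle is the middle step: establishing that $\nabla$ lowers the degree by exactly one, and hence is surjective onto $P_k$. Everything else---the recursive uniqueness of the sequence, matching the initial condition by a constant shift, and deducing uniqueness of $\phi$---is routine. Equivalently one could telescope to $a_n = c + \sum_{j=1}^{n} f(j)$ and invoke the classical fact that $\sum_{j=1}^{n} j^{m}$ is a polynomial of degree $m+1$ in $n$, but the difference-operator formulation packages this cleanly and avoids computing the power sums explicitly.
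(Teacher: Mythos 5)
Your argument is correct and complete. Note that the paper does not actually prove this lemma: it simply refers the reader to a handbook of discrete mathematics for ``elementary properties of linear recurrence relations,'' so there is no in-paper proof to match against. The implicit standard argument behind that citation is the one you mention at the end: telescope to $a_n = c + \sum_{j=1}^{n} f(j)$ and invoke the classical fact that $\sum_{j=1}^{n} j^{m}$ is a polynomial of degree $m+1$ in $n$. Your route via the backward difference operator $\nabla$ on $P_{k+1}$ reaches the same conclusion more cleanly: the leading-term computation showing that $\nabla$ drops the degree by exactly one (valid over $\mathbb{R}$, where $d\,a_d \neq 0$), combined with rank--nullity, gives surjectivity onto $P_k$ without ever writing down the power sums, and the constant shift then fixes the initial condition without disturbing the degree since $k+1 \geq 1$. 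You also correctly separate the two uniqueness claims --- the sequence is determined recursively, and the interpolating polynomial is determined because two polynomials agreeing on all nonnegative integers coincide --- both of which the lemma's statement implicitly requires. This is a sound, self-contained replacement for the paper's external reference.
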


\begin{lemma}\label{evaluation}

Let $a_{0}, a_{1}, a_{2}, \ldots , a_{n}, \ldots $ be a given
sequence of real numbers that satisfy inequalities $a_{n}\leq
a_{n-1}+f(n), n=1, 2, \ldots $, where $f(x)$ is a given polynomial
of degree $k$ Then there exists a polynomial $g(x)$ of degree
$k+1$ such that $a_{n}\leq g(n)$ for all $n=0, 1, 2, \ldots $.
\end{lemma}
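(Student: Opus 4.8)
The plan is to compare the given sequence with the solution of the associated \emph{equality} recurrence and then invoke Lemma~\ref{recurrence}. First I would introduce an auxiliary sequence $\{b_n\}$ defined by the equality relation $b_n - b_{n-1} = f(n)$ for $n\geq 1$, equipped with the same initial condition $b_0 = a_0$. By Lemma~\ref{recurrence} this recurrence has a unique solution of the form $b_n = \phi(n)$, where $\phi(x)$ is a polynomial of degree $k+1$. I would then set $g = \phi$, so that $g$ automatically has the required degree.

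The heart of the argument is to show that $a_n \leq b_n$ for every $n$, and this follows by a straightforward comparison. Summing the defining inequalities $a_j - a_{j-1}\leq f(j)$ over $j = 1,\ldots,n$ telescopes on the left and gives $a_n - a_0 \leq \sum_{j=1}^{n} f(j)$. The same telescoping applied to the equalities $b_j - b_{j-1} = f(j)$ gives $b_n - b_0 = \sum_{j=1}^{n} f(j)$. Since $a_0 = b_0$, subtracting yields $a_n \leq b_n = \phi(n) = g(n)$, which is exactly the desired estimate; the case $n=0$ is $a_0 = g(0)$. Equivalently one may run an immediate induction: the base case is $a_0 = b_0$, and if $a_{n-1}\leq b_{n-1}$ then $a_n \leq a_{n-1} + f(n)\leq b_{n-1} + f(n) = b_n$.

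I do not expect any genuine obstacle here, as the statement is an elementary majorization result whose only substantive input is Lemma~\ref{recurrence}, which supplies the polynomial growth. The single point deserving a moment of care is the \emph{direction} of the inequality: because the hypothesis controls $a_n$ only from above, the comparison sequence must be built from the equality recurrence rather than from any lower estimate, and the one-sided bound must be checked to propagate through the recursion; the telescoping above makes this immediate. Note also that the conclusion asserts only the existence of a degree-$(k+1)$ polynomial majorant, not exact polynomial growth of $a_n$, so no lower bound on the $a_n$ is required and nothing beyond Lemma~\ref{recurrence} and summation is needed.
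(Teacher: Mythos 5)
Your proposal is correct and follows essentially the same route as the paper: both introduce the comparison sequence $b_{0}=a_{0}$, $b_{n}=b_{n-1}+f(n)$, obtain the degree-$(k+1)$ polynomial solution from Lemma~\ref{recurrence}, and establish $a_{n}\leq b_{n}$ by an elementary induction. The telescoping variant you mention is just a repackaging of that same induction, so there is no substantive difference.
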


\begin{proof}
Consider the sequence $b_{0}, b_{1}, b_{2}, \ldots , b_{n}, \ldots
$, where $b_{0}=a_{0}, b_{1}=a_{0}+f(1), b_{n}=b_{n-1}+f(n)$ for
all $n.$ For this sequence, which obviously satisfies the
recurrence relation $b_{n}=b_{n-1}+f(n)$ with the initial
condition $b_{0}=a_{0}$, one can find by Lemma~\ref{recurrence}
the unique solution of the form $b_{n}=g(n)$ where $g(x)$ is a
polynomial of degree $k+1.$ Further, note that $a_{0}\leq b_{0}$
and by conditions of Lemma $a_{1}\leq a_{0}+f(1)=b_{1}.$ If the
inequality $a_{k-1}\leq b_{k-1}$ is proved then we have $a_{k}\leq
a_{k-1}+f(k)\leq b_{k-1}+f(k)=b_{k}$. Therefore $a_{n}\leq
b_{n}=g(n)$ for all $n.$ The proof is complete.
\end{proof}

\begin{theorem}\label{degree}
Let $L$ be a Lie algebra over a field of characteristic $0$ and
let $I$ be its solvable ideal of derived length $n.$ Then there
exists a polynomial $f_{n}(x)$ of degree $n$ such that  for every
derivation $D\in Der(L)$ the ideal $I+D(I)+\cdots +D^{k}(I)$ is
solvable of derived length $\leq f_{n}(k)$.
\end{theorem}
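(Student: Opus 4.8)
The plan is to argue by induction on the derived length $n=s(I)$, and to convert a controlled step-by-step growth in $k$ into a polynomial bound by means of Lemmas~\ref{recurrence} and~\ref{evaluation}. Throughout write $J_k=I+D(I)+\cdots+D^k(I)$, which is an ideal of $L$ by Lemma~\ref{rule}, and set $a_k=s(J_k)$, so that $a_0=s(I)=n$. For the base case $n=1$ the ideal $I$ is abelian, and Lemma~\ref{estimation} already gives $s(J_k)\le k+1$; hence one may take $f_1(x)=x+1$, a polynomial of degree $1$.

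For the inductive step let $n=s(I)\ge 2$ and put $M=I^{(1)}=[I,I]$, an ideal of $L$ of derived length $n-1$. By the induction hypothesis applied to the pair $(L,M)$ there is a polynomial $f_{n-1}$ of degree $n-1$ with $s\bigl(M+D'(M)+\cdots+D'^{m}(M)\bigr)\le f_{n-1}(m)$ for every derivation $D'$ and every $m$; I will use it for $D'=D$. The crux is to show that passing from $J_{k-1}$ to $J_k$ raises the derived length only by an amount polynomial of degree $n-1$ in $k$, and this rests on the containment
$$[J_k,J_k]\subseteq J_{k-1}+\bigl(M+D(M)+\cdots+D^{2k}(M)\bigr).$$
To prove it, write $J_k=J_{k-1}+D^k(I)$ and expand: the terms $[J_{k-1},J_{k-1}]$, $[J_{k-1},D^k(I)]$, $[D^k(I),J_{k-1}]$ all lie in $J_{k-1}$ since $J_{k-1}$ is an ideal, so $[J_k,J_k]\subseteq J_{k-1}+[D^k(I),D^k(I)]$. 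For the surviving bracket apply the Leibnitz rule of Lemma~\ref{rule} to $D^{2k}([x,y])$ with $x,y\in I$ and isolate the central summand ${2k\choose k}[D^k(x),D^k(y)]$: every off-central summand has, for $s\ne k$, one of $s,\,2k-s$ at most $k-1$, hence a factor in $J_{k-1}$ and so lies in $J_{k-1}$, while $D^{2k}([x,y])\in D^{2k}(M)$ because $[x,y]\in M$. Since $\char F=0$ the coefficient ${2k\choose k}$ is invertible, giving $[D^k(I),D^k(I)]\subseteq J_{k-1}+D^{2k}(M)$ and thus the displayed inclusion. This is the main obstacle, and its resolution is precisely the observation that the unique bracket not automatically absorbed by the ideal $J_{k-1}$ is forced into the $D$-span of the \emph{smaller} ideal $M$.

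From the containment I finish as follows. Both $J_{k-1}$ and $\Phi:=M+D(M)+\cdots+D^{2k}(M)$ are ideals of $L$ (Lemma~\ref{rule}), and for ideals one has the standard estimate $s(A+B)\le s(A)+s(B)$, obtained from $(A+B)/A\cong B/(A\cap B)$ together with $G^{(a+c)}=0$ whenever $N\trianglelefteq G$ with $s(N)\le a$ and $s(G/N)\le c$. As $J_k^{(1)}=[J_k,J_k]$ is contained in the ideal $J_{k-1}+\Phi$, this yields
$$a_k=s(J_k)\le 1+s(J_{k-1}+\Phi)\le 1+s(J_{k-1})+s(\Phi)\le 1+a_{k-1}+f_{n-1}(2k).$$
Hence $a_k\le a_{k-1}+h(k)$ with $h(k)=1+f_{n-1}(2k)$, a polynomial of degree $n-1$, and Lemma~\ref{evaluation} produces a polynomial $g$ of degree $n$ with $a_k\le g(k)$ for all $k$; taking $f_n=g$ completes the induction. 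Finally, since the forcing term $h$ and the initial value $a_0=n$ are independent of $D$, the polynomial $f_n$ delivered by Lemma~\ref{evaluation} is the same for every derivation, which is exactly the uniformity in $D$ demanded by the statement.
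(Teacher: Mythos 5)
Your proposal is correct and follows essentially the same route as the paper's own proof: induction on $n$ with Lemma~\ref{estimation} as the base case, isolating the central term ${2k\choose k}[D^k(x),D^k(y)]$ in the Leibnitz expansion of $D^{2k}([x,y])$ to push $[D^k(I),D^k(I)]$ into $J_{k-1}+D^{2k}(I^{(1)})$, bounding the derived length of the ideal generated by $J_{k-1}$ and the $D$-span of $I^{(1)}$ via the sum-of-ideals estimate, and closing the recurrence with Lemma~\ref{evaluation}. You merely make explicit a few steps the paper leaves implicit (the inequality $s(A+B)\le s(A)+s(B)$ and the uniformity of $f_n$ in $D$).
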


\begin{proof}
Induction on $n.$ If $n=1$ then by Lemma~\ref{estimation} the
derived length of the ideal $I_{k}=I+D(I)+\cdots +D^{k}(I)$ does
not exceed $k+1$, so one can take $f_{1}(x)=x+1.$ Assume that the
inequality $s(I_{k})\leq f_{n-1}(k)$ is proved for some polynomial
$f_{n-1}(x)$ of degree $n-1$ provided that $s(I)=n-1. $ Denote for
convenience by $s^{(n)}_{k}$ the derived length of the ideal
$I_{k}$, where $I$ is a solvable ideal of derived length $n.$ Take
any elements $x, y\in I$ and consider the relation
$$[D^{k}(x), D^{k}(y)]={2k\choose k}^{-1}\cdot \{ D^{2k}([x,
y])-$$
$$-\sum _{s=0, s\not= k}^{2k}{2k\choose s}[D^{s}(x), D^{2k-s}(y)]\}
\eqno (3) $$ which can be easily deduced from (1) where $m=2k$.
Since at least one of numbers $s$ or $2k-s$ is less than $k$, we
obtain that the sum $\sum _{s=0, s\not= k}^{2k}{2k\choose
s}[D^{s}(x), D^{2k-s}(y)]$ belongs to $I_{k-1}.$  The sum
$D^{2k}(I^{(1)})+I_{k-1}$ is contained in the ideal
$$I^{(1)}+D(I^{(1)})+\ldots +D^{2k}(I^{(1)})+I_{k-1}$$
whose derived length does not exceed
$s^{(n-1)}_{2k}+s^{(n)}_{k-1}.$  So, from the relation (3) we
obtain in our notations $s^{(n)}_{k}-1\leq
s^{(n)}_{k-1}+s^{(n-1)}_{2k}.$ By induction hypothesis it holds
$s^{(n-1)}_{2k}\leq f_{n-1}(2k)$ and therefore $s^{(n)}_{k}\leq
s^{(n)}_{k-1}+f_{n-1}(2k)+1.$ Denote $h_{n-1}(x)=f_{n-1}(2x)+1.$
Then $h_{n-1}(x)$ is a polynomial of degree $n-1.$ We obtain a
recurrence relation $s^{(n)}_{k}\leq s^{(n)}_{k-1}+h_{n-1}(k).$ By
Lemma~\ref{evaluation} it holds $s^{(n)}_{k}\leq f_{n}(k)$, where
$f_{n}(x)$ is a polynomial of degree $n.$  The proof is complete.

\end{proof}

\end{document}